\definecolor{myurlcolor}{rgb}{0,0,0.7}
\newtheorem{teo}{Theorem}
\newtheorem{defin}[teo]{Definition}
\newtheorem{rem}[teo]{Remark}
\newtheorem{cor}[teo]{Corollary}
\newtheorem{lem}[teo]{Lemma}
\newcommand{\beq}{\begin{equation}}
\newcommand{\eeq}{\end{equation}}
\begin{document}

\title{On the axiomatization of convex subsets of Banach spaces}

\author{Valerio Capraro}
\address{University of Neuchatel, Switzerland}
\thanks{Supported by Swiss SNF Sinergia project CRSI22-130435}
\email{valerio.capraro@unine.ch}

\author{Tobias Fritz}
\address{Institut de Ci\`{e}ncies Fot\`{o}niques, Mediterranean Technology Park, 08860 Castelldefels (Barcelona), Spain}
\thanks{Supported by the EU STREP QCS}
\email{tobias.fritz@icfo.es}

\keywords{Convex-like structure, Stone's barycentric calculus, convex space}

\subjclass[2010]{Primary 52A01; Secondary 46L36}

\date{\today}

\maketitle

\begin{abstract}
We prove that any convex-like structure in the sense of Nate Brown is affinely
and isometrically isomorphic to a closed convex subset of a Banach space. This
answers an open question of Brown. As an intermediate step, we identify Brown's
algebraic axioms as equivalent to certain well-known axioms of abstract
convexity. We conclude with a new characterization of convex subsets of Banach spaces.
\end{abstract}

\bigskip
\textbf{Post-publication note:} \emph{As we have learnt in the meantime\footnote{We thank Klaus Keimel for pointing out Neumann's work to us.}, another important reference on the theory of convex spaces is~\cite{Ne}, where all the results of relevance to us can already be found. In particular, this concerns Definition~\ref{convex} (in a slightly different formulation), Theorem~\ref{stone} (in the language of universal algebra~\cite[Theorem~2]{Ne}) and Lemma~\ref{lem5}, none of which is therefore original to this paper. Our main results (Corollary~\ref{embedding} and Theorem~\ref{banach}) seem to remain original.} 
\bigskip

\section{Introduction}

While studying an invariant of $\mathrm{II_1}$-factors related to Connes'
embedding conjecture, Brown~\cite{Br} found that there is a natural way of
defining convex combinations on this invariant.  However, there seemed to be no
evident embedding of this set into some linear space such that the convex
combinations are precisely those inherited from the vector space structure.
Searching for an axiomatization of those metric spaces where it makes sense to
talk about convex combinations without having any linear structure, he proposed
the notion of a convex-like structure. The obvious examples of convex-like
structures are closed convex subsets of Banach spaces. The very basic question
is whether any convex-like structure is of this form. Besides being interesting
in itself, this question has also a technical reason: there are many properties
of convex combinations which are trivial to verify in vector spaces, but are
hard to prove in the context of convex-like structures.  Here we give a
positive answer to this problem.

Actually, we obtain this result as a consequence of a more general
one: four of the five Brown's axiom, exactly the algebraic ones, are equivalent
to certain well-known axioms of abstract convexity. These were introduced by
Stone~\cite{St} and have since been discussed and sometimes rediscovered,
modulo minor variations, several
times~\cite{Fr}\cite{Gu}\cite{Mo}\cite{PR}\cite{Se} using various terminology;
here, we shall follow the notation and terminology of~\cite{Fr}.

\section{Convex-like structures and convex spaces}

In order to be precise, and also for the convenience of the
reader, we recall the definitions and the Stone embedding theorem
which we are going to use. The following two definitions are both
abstractions of the properties of convex combinations in vector
spaces.

\begin{defin}[{\cite{Br}}]\label{convex-like}
Let $(X,d)$ be a complete metric space. Take $X^{(n)} = X\times
\cdots \times X$ to be the $n$-fold Cartesian product and $\mathrm{Prob}_n$
the set of probability measures on the $n$-element set
$\{1,2,\ldots,n\}$ endowed with the $\ell_1$-metric $\| \mu -
\tilde{\mu} \| = \sum_{i=1}^n |\mu(i) - \tilde{\mu}(i) |$. We say
that $(X,d)$ has a \emph{convex-like structure} if for every $n \in
\mathbb N$ and $\mu \in \mathrm{Prob}_n$ there is given a continuous map
$\gamma_\mu \colon X^{(n)} \to X$ such that
\begin{enumerate}[label={\normalfont{($\gamma.\arabic*$)}}]
\item \label{abelian} for each permutation $\sigma \in S_n$ and
$x_1,\ldots, x_n \in X$, $$\gamma_\mu(x_1,\ldots,x_n) =
\gamma_{\mu\circ\sigma} (x_{\sigma(1)},\ldots, x_{\sigma(n)});$$

\item \label{linearity} if $x_1 = x_2$, then $\gamma_\mu(x_1,x_2,
\ldots,x_n) = \gamma_{\tilde{\mu}} (x_1, x_3, \ldots, x_n)$, where
$\tilde{\mu} \in \mathrm{Prob}_{n-1}$ is given by $\tilde{\mu} (1) = \mu(1) +
\mu(2)$ and $\tilde{\mu} (j) = \mu(j+1)$ for $2 \leq j \leq n-1$;

\item\label{dirac} if $\mu(i) = 1$, then $\gamma_\mu(x_1,\ldots,x_n) = x_i$;

\item\label{metric} The metric compatibility axiom\footnote{Brown's original metric compatibility
axiom actually consisted of two conditions. See remark~\ref{finalrem}.} for all $y_1,\ldots,y_n \in X$,
$$d (\gamma_{\mu} (x_1,\ldots,x_n), \gamma_{\mu} (y_1,\ldots,y_n))
\leq \sum_{i = 1}^n \mu(i) d(x_i, y_i);$$

\item\label{algebraic} for all $\nu \in \mathrm{Prob}_2$, $\mu \in \mathrm{Prob}_n$,
$\tilde{\mu} \in \mathrm{Prob}_m$ and $x_1,\ldots, x_n, \tilde{x}_1,\ldots,
\tilde{x}_m \in X$, $$\gamma_{\nu}(\gamma_\mu(x_1,\ldots,x_n),
\gamma_{\tilde{\mu}}( \tilde{x}_1,\ldots, \tilde{x}_m) ) =
\gamma_{\eta} ( x_1,\ldots,x_n, \tilde{x}_1,\ldots, \tilde{x}_m),$$
where $\eta \in \mathrm{Prob}_{n+m}$ is given by $\eta(i) = \nu(1)\mu(i)$, if
$1\leq i \leq n$, and $\eta(j + n) = \nu(2) \tilde{\mu}(j)$, if
$1\leq j \leq m$.
\end{enumerate}
\end{defin}

The idea behind this definition is that the $n$-ary operation $\gamma_\mu$ is supposed to stand for a convex combination with weights given by the coefficients of $\mu$:
\begin{equation}
\label{gammadef}
\gamma_\mu(x_1,\ldots,x_n) \:\widehat{=}\: \sum_{i=1}^n\mu(i)x_i\:.
\end{equation}
With this intuition, it is clear why one wants the
properties~\ref{abelian} through~\ref{algebraic} to hold.

\begin{defin}[{\cite{Fr}}]\label{convex}
A convex space is given by a set $X$ and a family of binary
operations $\{cc_\lambda\}_{\lambda\in[0,1]}$ on $X$ such that
\begin{enumerate}[label={\normalfont{(cs.\arabic*)}}]
\item\label{unitlaw} $cc_0(x,y)=x \quad \forall x,y\in X$
\item\label{idempotency} $cc_\lambda(x,x)=x \quad \forall x\in X,\:\lambda\in[0,1]$,
\item\label{commutativity} $cc_\lambda(x,y)=cc_{1-\lambda}(y,x) \quad \forall x,y\in X,\:\lambda\in[0,1]$,
\item\label{associativity} $cc_\lambda(cc_\mu(x,y),z)=cc_{\lambda\mu}(x,cc_\nu(y,z)) \quad \forall x,y,z\in X,\:\lambda,\mu\in[0,1]$,
where $\nu$ is arbitrary if $\lambda=\mu=1$ and $\nu=\frac{\lambda(1-\mu)}{1-\lambda\mu}$ otherwise.
\end{enumerate}
\end{defin}

Now an interpretation analogous to~(\ref{gammadef}) holds: the
$cc_\lambda$ simply model binary convex combinations with weight
$\lambda$:
\begin{equation}
\label{ccdef}
cc_\lambda(x,y) \:\widehat{=}\: \lambda x+(1-\lambda)y\:.
\end{equation}
Again, properties~\ref{unitlaw} through~\ref{associativity} clearly
hold for convex combinations in vector spaces.

Our first result follows now. It states that convex-like structures differ from
convex spaces just by the metric compatibility axiom~\ref{metric}. The following
equation~(\ref{gammacc}) is motivated by the correspondences~(\ref{gammadef})
and~(\ref{ccdef}).

A piece of notation: when $\mu(1)=\lambda\in[0,1]$ and $\mu(2)=1-\lambda$ are the parameters of a distribution $\mu\in \mathrm{Prob}_2$, then we also write $\gamma_{\lambda,1-\lambda}$ instead of $\gamma_\mu$.

\begin{teo}\label{equivalence}
The algebraic axioms~\ref{abelian},~\ref{linearity},~\ref{dirac} and~\ref{algebraic} of Definition
\ref{convex-like} are equivalent to the axioms of convex space in
Definition \ref{convex}. More precisely: for a given set $X$, a convex-like structure on $X$ and the structure of a convex space on $X$ mutually determine each other by the identity
\begin{equation}
\label{gammacc}
cc_{\lambda}(x,y)=\gamma_{\lambda,1-\lambda}(x,y)\:.
\end{equation}

\begin{proof}
Let us start proving that Brown's axioms~\ref{abelian},~\ref{linearity},~\ref{dirac} and~\ref{algebraic} for a convex-like structure imply
the axioms of convex spaces when the $cc_{\lambda}$ are defined as in~(\ref{gammacc}).
\begin{enumerate}
\item[\ref{unitlaw}] We have $cc_0(x,y)=\gamma_{0,1}(x,y)=y$ thanks to Brown's axiom~\ref{dirac}.
\item[\ref{idempotency}] We have $cc_\lambda(x,x)=\gamma_{\lambda,1-\lambda}(x,x)$ thanks to Brown's axiom~\ref{linearity}.
\item[\ref{commutativity}] We have
$$
cc_\lambda(x,y)=\gamma_{\lambda,1-\lambda}(x,y)=\gamma_{1-\lambda,\lambda}(y,x)=cc_{1-\lambda}(y,x)
$$
thanks to Brown's axiom~\ref{abelian}.
\item[\ref{associativity}] This is implied by the previous axioms when $\lambda=\mu=1$, so it is enough to treat the case $\lambda\mu\neq1$. We will evaluate $cc_\lambda(cc_\mu(x,y),z)$ and $cc_{\lambda\mu}(x,cc_{\frac{\lambda(1-\mu)}{1-\lambda\mu}}(y,z))$ separately and obtain two identical expressions. Using axiom~\ref{algebraic}, we have
$$
cc_\lambda(cc_\mu(x,y),z)=\gamma_\eta(x,y,z)
$$
where $\eta(1)=\lambda\mu, \eta(2)=\lambda(1-\mu)$ and
$\eta(3)=1-\lambda$. On the other hand, the same~\ref{algebraic} also implies
$$
cc_{\lambda\mu}(x,cc_{\frac{\lambda(1-\mu)}{1-\lambda\mu}}(y,z))=\gamma_\eta(x,y,z)
$$
with the same distribution $\eta\in \mathrm{Prob}_3$.
\end{enumerate}

We now proceed to the proof of the converse implication. Given a
family of binary operations $cc_{\lambda}$ which satisfy the axioms
of convex spaces, we first define $\gamma_{\lambda,1-\lambda}$ according to equation~(\ref{gammacc}). Given this, it then has to be shown that there exist unique choices for the $\gamma_\eta$ with $\eta\in\mathrm{Prob}_n$ for all $n\in\mathbb{N}$ such that~\ref{abelian},~\ref{linearity},~\ref{dirac} and \ref{algebraic} hold. Since $\gamma_{\iota}=\mathrm{id}_X$ for $\iota\in\mathrm{Prob}_1$, and, for $n\geq 3$, any $\eta\in\mathrm{Prob}_n$ can appear on the right-hand side of~\ref{algebraic}, we can already conclude the uniqueness: it is enough to specify the $\gamma_\eta$ with $\eta\in\mathrm{Prob}_n$ for $n=2$.

We still need to show the existence part. To this end, we first define $\gamma_\mu$ for $\mu\in \mathrm{Prob}_n$ recursively by setting
$$
\gamma_\mu(x_1,\ldots,x_n)\equiv \left\{\begin{array}{cc} x_n & \textrm{if }\mu(n)=1\\ cc_{1-\mu(n)}\left(\gamma_\nu(x_1,\ldots,x_{n-1}),x_n\right) & \textrm{if }\mu(n)\neq 1\end{array}\right.
$$
where $\nu\in\mathrm{Prob}_{n-1}$ given by
$\nu(i)=\frac{\mu(i)}{1-\mu(n)}$. So one obtains all $n$-ary operations by
repeated application of the binary ones.

Due to $cc_{\lambda}(x,y)=cc_{1-\lambda}(y,x)$, this definition
respects the permutation invariance~\ref{abelian} when the permutation does
nothing but exchange $x_1$ with $x_2$. For any $n\geq 3$, the definition can be expanded to
$$
\gamma_\mu(x_1,\ldots,x_n)=\left\{\begin{array}{cc} x_n & \textrm{if }\mu(n)=1\\ x_{n-1} & \textrm{if }\mu(n-1)=1\\ cc_{1-\mu(n)}\left(cc_{1-\frac{\mu(n-1)}{1-\mu(n)}}\left(\gamma_\eta(x_1,\ldots,x_{n-2}),x_{n-1}\right),x_n\right) & \textrm{otherwise} \end{array}\right.
$$
with $\eta\in\mathrm{Prob}_{n-2}$ given by
$\eta(i)=\frac{\nu(i)}{1-\nu(n-1)}=\frac{\mu(i)}{1-\mu(n-1)-\mu(n)}$.
Writing $y=\gamma_\eta(x_1,\ldots,x_{n-2})$, the associativity rule~\ref{associativity}
gives
\begin{align*}
\gamma_\mu(x_1,\ldots,x_n)&=cc_{1-\mu(n)}\left(cc_{1-\frac{\mu(n-1)}{1-\mu(n)}}(y,x_{n-1}),x_{n-2}\right) \\
&=cc_{1-\mu(n-1)-\mu(n)}\left(y,cc_{\frac{\mu(n-1)}{\mu(n-1)+\mu(n)}}(x_{n-1},x_n)\right)
\end{align*}
and hence~\ref{commutativity} implies the permutation invariance~\ref{abelian} also for $\gamma_\mu$ when
exchanging $x_{n-1}$ with $x_n$ while keeping all other arguments fixed. By
the recursive definition of $\gamma_\mu$, this argument also proves
invariance under transposing $x_{k-1}$ with $x_k$ for any $k<n$.
Hence now we know that~\ref{abelian} holds with respect to all transpositions
of neighboring arguments. But since the latter generate all permutations,~\ref{abelian} holds in complete generality.

With this, Brown's~\ref{linearity} and~\ref{dirac} are straightforward to prove: by~\ref{abelian},
the property~\ref{linearity} is equivalent to the analogous one with
$x_{n-1}=x_n$ instead of $x_1=x_2$. The latter follows from the
previous considerations together with the axiom $cc_\lambda(x,x)=x$.
The statement~\ref{dirac}, for $i=1$, follows directly from the definition
of $\gamma_\mu$ together with $cc_1(x,y)=y$.

Finally, we prove~\ref{algebraic} by induction on $m$. For $m=1$, this equation
coincides with our definition of its right-hand side. For $m\geq 2$, we can assume $\tilde{\mu}(m)\neq 1$ by appealing to~\ref{abelian}. Then the left-hand side of~\ref{algebraic} can be written as
$$
cc_{\nu(1)}\left(\gamma_\mu(x_1,\ldots,x_n),cc_{1-\tilde{\mu}(m)}\left(\gamma_{\mu'}(\tilde{x}_1,\ldots,\tilde{x}_{m-1}),\tilde{x}_m\right)\right)
$$
where $\mu'(i)=\frac{\tilde{\mu}(i)}{1-\tilde{\mu}(m)}$. An
application of the associativity rule~\ref{associativity} evaluates this to
$$
cc_{1-\tilde{\mu}(m)\nu(2)}\left(cc_{\frac{\nu(1)}{1-\tilde{\mu}(m)\nu(2)}}\left(\gamma_{\mu}(x_1,\ldots,x_n),\gamma_{\mu'}(\tilde{x}_1,\ldots,\tilde{x}_{m-1})\right),\tilde{x}_m\right)
$$
Now by the induction assumption, this can be written as
$$
cc_{1-\tilde{\mu}(m)\nu(2)}\left(\gamma_\delta(x_1,\ldots,x_n,\tilde{x}_1,\ldots,\tilde{x}_{m-1}),\tilde{x}_m\right)
$$
where $\delta$ is the distribution with
$\delta(i)=\frac{\nu(1)}{1-\tilde{\mu}(m)\nu(2)}\mu(i)$ for $1\leq
i\leq n$ and
$\delta(i+n)=\frac{\nu(2)}{1-\tilde{\mu}(m)\nu(2)}\tilde{\mu}(i)$.
This equation is the definition of the right-hand side of~\ref{algebraic}.
\end{proof}
\end{teo}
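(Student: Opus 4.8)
The plan is to prove both implications, always using the correspondence $cc_\lambda(x,y)=\gamma_{\lambda,1-\lambda}(x,y)$. The forward direction is essentially a dictionary translation: assuming a convex-like structure and defining $cc_\lambda$ this way, each convex-space axiom should follow from the matching Brown axiom --- \ref{unitlaw} from \ref{dirac}, \ref{idempotency} from \ref{linearity}, \ref{commutativity} from the $S_2$-case of \ref{abelian}, and \ref{associativity} by applying \ref{algebraic} to each side of the associativity identity and checking that the two resulting distributions in $\mathrm{Prob}_3$ coincide, which is a one-line computation with the weights.

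For the converse I would first note that the $\gamma_\mu$ are forced. Since $\gamma_\iota=\mathrm{id}_X$ for $\iota\in\mathrm{Prob}_1$ and, for $n\geq 3$, every $\eta\in\mathrm{Prob}_n$ arises as the right-hand side of some instance of \ref{algebraic}, that axiom expresses each $\gamma_\eta$ through binary operations; hence the binary data $cc_\lambda=\gamma_{\lambda,1-\lambda}$ already pin everything down, which yields uniqueness immediately.

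For existence I would define $\gamma_\mu(x_1,\dots,x_n)$ by recursion on $n$, peeling off the last coordinate: set $\gamma_\mu=x_n$ if $\mu(n)=1$, and otherwise $\gamma_\mu(x_1,\dots,x_n)=cc_{1-\mu(n)}\bigl(\gamma_\nu(x_1,\dots,x_{n-1}),x_n\bigr)$ with $\nu(i)=\mu(i)/(1-\mu(n))$, so every $n$-ary operation is built from binary ones. Then I would verify the four Brown axioms in order. The crucial step, where associativity does all the work, is \ref{abelian}: it suffices to prove invariance under transposing two neighboring arguments, since such transpositions generate $S_n$; the swap of $x_1$ and $x_2$ is immediate from \ref{commutativity}, while the swap of $x_{n-1}$ and $x_n$ follows by expanding the recursion two steps and re-bracketing via \ref{associativity}, after which \ref{commutativity} finishes it --- and because the recursion only ever touches the last coordinate, this also gives every neighboring transposition. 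Once \ref{abelian} is available, axiom \ref{linearity} reduces to the case $x_{n-1}=x_n$, which is \ref{idempotency}; axiom \ref{dirac} follows from the recursive definition together with $cc_1(x,y)=y$ and, for the position of the index, from \ref{abelian}; and axiom \ref{algebraic} is proved by induction on $m$, again peeling off $\tilde{x}_m$, re-bracketing with \ref{associativity}, and invoking the inductive hypothesis on the remaining arguments.

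The main obstacle is bookkeeping rather than depth: each application of \ref{associativity} requires matching the weight $\frac{\lambda(1-\mu)}{1-\lambda\mu}$ appearing there against the weight produced by the recursive definition, and in the induction for \ref{algebraic} one must check that the distribution obtained after using the inductive hypothesis is exactly the $\eta$ prescribed on the right-hand side. These computations are routine but must be carried out with care.
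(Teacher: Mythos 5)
Your proposal follows essentially the same route as the paper's own proof: the same dictionary translation for the forward direction, the same uniqueness argument via Prob$_1$ and \ref{algebraic}, the same recursive definition peeling off the last coordinate, the same reduction of \ref{abelian} to neighboring transpositions handled by \ref{commutativity} and a two-step expansion with \ref{associativity}, and the same induction on $m$ for \ref{algebraic}. It is correct as an outline; only the weight bookkeeping you acknowledge remains to be written out.
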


\section{Embeddings into vector spaces}

The following theorem and proof have been adapted from~\cite{St}.

\begin{teo}[{\cite{St}}]\label{stone}
A convex space embeds into a real vector space with~(\ref{ccdef}) if and only if the following
cancellation property holds:
$$
cc_\lambda(x,y)=cc_\lambda(x,z)\:\textrm{ with }\:\lambda\in(0,1)\quad\Longrightarrow\quad y=z\:.
$$
\end{teo}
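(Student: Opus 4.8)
The plan is to dispatch the ``only if'' direction by a one-line computation and to prove the ``if'' direction by the standard \emph{cone construction}. For the forward implication, suppose $\iota\colon X\hookrightarrow V$ is an injection into a real vector space with $\iota\bigl(cc_\lambda(x,y)\bigr)=\lambda\,\iota(x)+(1-\lambda)\,\iota(y)$. If $cc_\lambda(x,y)=cc_\lambda(x,z)$ with $\lambda\in(0,1)$, then applying $\iota$ gives $\lambda\,\iota(x)+(1-\lambda)\,\iota(y)=\lambda\,\iota(x)+(1-\lambda)\,\iota(z)$, hence $(1-\lambda)\bigl(\iota(y)-\iota(z)\bigr)=0$ in $V$; since $1-\lambda\neq0$ this forces $\iota(y)=\iota(z)$, and injectivity gives $y=z$.

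For the converse, assume the cancellation property and build the vector space from below. First form the \emph{cone over $X$}: let $C:=\bigl(\mathbb{R}_{>0}\times X\bigr)\sqcup\{0\}$, thinking of the pair $(t,x)$ as the formal multiple $t\cdot x$ and of $0$ as the zero vector. Define addition by $0+c=c$, and
$$
(s,x)+(t,y):=\Bigl(s+t,\ cc_{\frac{s}{s+t}}(x,y)\Bigr),
$$
and a scaling by nonnegative reals via $r\cdot(t,x):=(rt,x)$ for $r>0$ and $0\cdot c:=0$. Commutativity of $+$ on $C$ is exactly axiom~\ref{commutativity}; associativity of $+$ is axiom~\ref{associativity}, applied with $\lambda=\frac{s+t}{s+t+u}$ and $\mu=\frac{s}{s+t}$, for which $\lambda\mu=\frac{s}{s+t+u}$ and $\nu=\frac{\lambda(1-\mu)}{1-\lambda\mu}=\frac{t}{t+u}$ come out exactly as needed (and $\lambda\mu<1$ here, so no caveat applies); and $0$ is a two-sided neutral element. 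The scaling makes $C$ an $\mathbb{R}_{\geq 0}$-semimodule: the laws $(rs)\cdot c=r\cdot(s\cdot c)$, $1\cdot c=c$ and $r\cdot(c+c')=r\cdot c+r\cdot c'$ are immediate, while $(r+s)\cdot c=r\cdot c+s\cdot c$ is where the idempotency axiom~\ref{idempotency} enters, since $r\cdot(t,x)+s\cdot(t,x)=\bigl(rt+st,\ cc_{\frac{rt}{rt+st}}(x,x)\bigr)=\bigl((r+s)t,x\bigr)$. Finally $C$ is \emph{cancellative}: from $(s,x)+(u,z)=(t,y)+(u,z)$ with $u>0$ one reads off $s=t$ and $cc_{\frac{s}{s+u}}(x,z)=cc_{\frac{s}{s+u}}(y,z)$, which by~\ref{commutativity} becomes $cc_{\frac{u}{s+u}}(z,x)=cc_{\frac{u}{s+u}}(z,y)$ with $\frac{u}{s+u}\in(0,1)$, so the cancellation hypothesis forces $x=y$ (the cases involving $0$ being trivial).

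Now pass to the Grothendieck group $V$ of the cancellative commutative monoid $(C,+)$: its elements are pairs $[(c,c')]$ modulo $(c,c')\sim(d,d')\iff c+d'=d+c'$, which is an equivalence relation precisely because $C$ is cancellative, and the canonical map $c\mapsto[(c,0)]$ embeds $C$ into $V$. The $\mathbb{R}_{\geq 0}$-action extends to an $\mathbb{R}$-action on $V$ by $r[(c,c')]:=[(rc,rc')]$ for $r\geq 0$ and $r[(c,c')]:=[((-r)c',(-r)c)]$ for $r<0$ (well-defined thanks to distributivity in $C$), and the remaining vector space axioms involving negative scalars follow formally from the semimodule axioms already checked, mixed-sign distributivity $(r+s)v=rv+sv$ being the only point that needs a short verification. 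Define $\iota\colon X\to V$ by $\iota(x):=[((1,x),0)]$; it is injective since $(1,x)=(1,y)$ in $C$ forces $x=y$. Lastly, for $\lambda\in(0,1)$,
$$
\lambda\,\iota(x)+(1-\lambda)\,\iota(y)=[((\lambda,x),0)]+[((1-\lambda,y),0)]=\Bigl[\Bigl((1,cc_\lambda(x,y)),0\Bigr)\Bigr]=\iota\bigl(cc_\lambda(x,y)\bigr)
$$
by the definition of addition on $C$, while the boundary cases $\lambda\in\{0,1\}$ reduce to~\ref{unitlaw} and~\ref{commutativity}. Hence $\iota$ is an embedding compatible with~(\ref{ccdef}).

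I expect the real content to be concentrated in two places. The first is the associativity of addition on $C$: it is axiom~\ref{associativity} in disguise, and it works only because the parameter $\nu=\frac{\lambda(1-\mu)}{1-\lambda\mu}$ is exactly the one needed to make the two bracketings agree. The second is the cancellativity of $C$, where the cancellation hypothesis is used in an essential and visibly necessary way. The passage from the cancellative cone $C$ to the vector space $V$ is routine Grothendieck-group bookkeeping; one should merely take a little care that the $\mathbb{R}$-scalar multiplication is well-defined and behaves correctly across sign changes. Conceptually this last step is nothing but the statement that a cancellative abstract cone group-completes to a real vector space.
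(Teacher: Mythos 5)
Your proof is correct, but it takes a genuinely different route from the paper's. The paper works ``from above'': it forms the free real vector space $V_X$ on the points of $X$, quotients by the subspace $U_X$ spanned by the relation vectors $e_{cc_\lambda(x,y)}-\lambda e_x-(1-\lambda)e_y$, and then proves injectivity of $x\mapsto\tilde e_x$ by a bookkeeping analysis: any element of $U_X$ splits into a positive and a negative part with equal total weight, both parts are read as convex combinations in $X$ defining the same point, and if $e_x-e_y\in U_X$ this yields $cc_\kappa(x,z)=cc_\kappa(y,z)$, whence $x=y$ by cancellation. You work ``from below'': you build the cone $(\mathbb{R}_{>0}\times X)\sqcup\{0\}$, verify it is a commutative, associative, cancellative $\mathbb{R}_{\geq0}$-semimodule --- with \ref{commutativity}, \ref{associativity} and \ref{idempotency} supplying exactly commutativity, associativity (your parameter check $\nu=\tfrac{t}{t+u}$, $\lambda\mu<1$ is right) and the law $(r+s)c=rc+sc$, and the cancellation hypothesis supplying cancellativity after a flip via \ref{commutativity} --- and then group-complete and extend scalars. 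The paper itself only \emph{remarks} on the analogy with the Grothendieck construction; you make it the actual proof. The trade-off: the paper's quotient construction is the free (left adjoint) functor and concentrates the use of cancellation in a single injectivity step, but that coefficient-cancellation argument is the delicate point of its proof; your version replaces it by the standard, transparent fact that a cancellative commutative monoid embeds in its Grothendieck group, at the cost of checking semimodule axioms and the sign cases of the $\mathbb{R}$-action, which you correctly flag and which do go through. One cosmetic caveat: your boundary cases $\lambda\in\{0,1\}$ implicitly use the unit law in the form $cc_0(x,y)=y$, consistent with~(\ref{ccdef}) and with how the paper itself uses it, though \ref{unitlaw} as literally printed reads $cc_0(x,y)=x$; this is an internal inconsistency of the paper's conventions and harmless for your argument, since those cases are degenerate.
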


\begin{proof}
It is clear that every convex subset of a vector space satisfies this cancellation property, so that it remains to prove the ``if'' direction.

Given a convex space $X$ with the cancellation property, we define a
real vector space as follows: let $V_X$ be the real vector space
formally generated by all points of $X$, so that $V(X)$ has a basis
$(e_x)_{x\in X}$. The vectors of the form \beq \label{quotientsub}
e_{cc_\lambda(x,y)}-\lambda e_x-(1-\lambda)e_y\:,\qquad x,y\in
X,\:\lambda\in[0,1]\:, \eeq generate a subspace $U_X\subseteq V_X$.
Let $W_X$ be the quotient space $V_X/U_X$ and let $\tilde{e}_x$
denote the image of $e_x$ under the canonical projection. Then the
mapping
$$
X\rightarrow W_X,\qquad x\mapsto\tilde{e}_x
$$
preserves convex combinations.

In order to see hat this mapping is injective, it is first necessary
to take a closer look at the subspace $U_X$. The vectors in $U_X$
are all the finite linear combinations of vectors of the
form~(\ref{quotientsub}). Taking the coefficients $\alpha_i$ and
$\beta_i$ to be non-negative, we can write such a linear combination
as
$$
\sum_{i=1}^m\alpha_i\left(e_{cc_{\lambda_i}(a_i,b_i)}-\lambda_i e_{a_i}-(1-\lambda_i)e_{b_i}\right)-\sum_{i=1}^m\beta_i\left(e_{cc_{\mu_i}(c_i,d_i)}-\mu_i e_{c_i}-(1-\mu_i)e_{d_i}\right)
$$
for certain points $a_i,b_i,c_i,d_i\in X$ and weights $\lambda_i,\mu_i\in[0,1]$. We split this into positive terms and negative terms as follows:
\beq
\label{uxvectors}
\sum_{i=1}^m\left(\alpha_i e_{cc_{\lambda_i}(a_i,b_i)}+\beta_i\mu_i e_{c_i}+\beta_i(1-\mu_i)e_{d_i}\right)-\sum_{i=1}^m\left(\beta_i e_{cc_{\mu_i}(c_i,d_i)}+\alpha_i\lambda_i e_{a_i}+\alpha_i(1-\lambda_i)e_{b_i}\right)
\eeq

This expression has two important properties: firstly, the sum of the coefficients of all negative terms equals the sum of the coefficients of all positive terms, namely $\sum_i(\alpha_i+\beta_i)$. If we assume this sum to be $1$ without loss of generality, then, secondly, both sums are just convex combinations. Interpreting these as convex combinations in $X$, these sums moreover define the same point in $X$.

We now prove the required injectivity property by showing that $\tilde{e}_x=\tilde{e}_y$ implies $x=y$ for any two points $x,y\in X$. The equation $\tilde{e}_x=\tilde{e}_y$ holds whenever $e_x-e_y$ lies in $U_X$. If this is the case, then there exists an expression of the form~(\ref{uxvectors}) where the first sum contains the term $\kappa e_x$ for some $\kappa>0$ and the second sum contains the term $\kappa e_y$ for the same $\kappa$, while all other terms cancel. Then by the above, the two sums in~(\ref{uxvectors}) define convex combinations of the same points with the same weights, except that the first one contains the point $x$ with weight $\kappa$, while the second one contains the point $y$ with weight $\kappa$. If one combines all the other points besides these $x$ and $y$ to a single point $z$ which carries a weight $1-\kappa$, one ends up with the equation
$$
cc_\kappa(x,z)=cc_\kappa(y,z)\:,
$$
which implies $x=y$ by the cancellation condition.
\end{proof}

The similarity to the Grothendieck construction which embeds a cancellative abelian monoid into an abelian group should be clear. Just like the latter proceeds by constructing a left adjoint to the inclusion functor of the category of abelian groups into the category of abelian monoids, Stone's embedding theorem implicitly constructs a left adjoint to the inclusion functor of the category of real vector spaces into the category of convex spaces.

We will soon prove that the metric compatibility axiom~\ref{metric} guarantees that the cancellation condition holds in a convex-like structure. This requires a bit of preparation:

\begin{lem}
\label{lem5}
If the equation
$$
cc_{\lambda}(y,x)=cc_{\lambda}(z,x)
$$
holds for some $x,y,z\in X$ and $\lambda\in(0,1)$, then it also holds for all $\lambda\in(0,1)$.
\end{lem}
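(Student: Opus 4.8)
The plan is to fix $x,y,z\in X$ and show that the set $S:=\{\lambda\in(0,1):cc_\lambda(y,x)=cc_\lambda(z,x)\}$ is either empty or all of $(0,1)$. I would reduce this to two claims: \textbf{(a)} $S$ is downward closed, i.e. $\lambda\in S$ and $0<t<\lambda$ imply $t\in S$; and \textbf{(b)} $\lambda\in S$ implies $\lambda^*\in S$, where $\lambda^*:=\tfrac{2\lambda}{1+\lambda}$ satisfies $\lambda<\lambda^*<1$. Granting these, pick any $\lambda_0\in S$ and set $\lambda_{k+1}=\tfrac{2\lambda_k}{1+\lambda_k}$; this sequence is strictly increasing and converges to $1$ (its only fixed points in $[0,1]$ are $0$ and $1$), so by (b) all $\lambda_k$ lie in $S$, by (a) each interval $(0,\lambda_k]$ lies in $S$, and $\bigcup_k(0,\lambda_k]=(0,1)$ forces $S=(0,1)$. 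Since the assertion is purely algebraic, I would work in the underlying convex space and use only axioms~\ref{idempotency},~\ref{commutativity} and~\ref{associativity}.

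The workhorse is a ``flattening'' identity: one application of~\ref{associativity} followed by collapsing $cc_\nu(b,b)=b$ with~\ref{idempotency} gives $cc_s\big(cc_\mu(a,b),b\big)=cc_{s\mu}(a,b)$ for all $a,b\in X$ and $s,\mu\in(0,1)$, and re-orienting with~\ref{commutativity} yields the companions $cc_s\big(cc_\mu(a,b),a\big)=cc_{1-s(1-\mu)}(a,b)$ and $cc_s\big(a,cc_\mu(a,b)\big)=cc_{1-(1-s)(1-\mu)}(a,b)$. Claim (a) is now immediate: applying $cc_\mu(-,x)$ to $cc_\lambda(y,x)=cc_\lambda(z,x)$ and using the flattening identity gives $cc_{\mu\lambda}(y,x)=cc_{\mu\lambda}(z,x)$ for every $\mu\in(0,1)$, which is exactly (a).

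Claim (b) is the crux. Writing $s=\tfrac1{1+\lambda}$ (so that $s\lambda=\tfrac{\lambda}{1+\lambda}$), the companion identities together with~\ref{associativity} and~\ref{commutativity} produce the chain
\begin{align*}
cc_{\lambda^*}(y,x)&=cc_{s}\big(cc_\lambda(y,x),y\big)=cc_{s}\big(cc_\lambda(z,x),y\big)\\
&=cc_{s\lambda}\big(z,cc_{1-\lambda}(x,y)\big)=cc_{s\lambda}\big(z,cc_\lambda(y,x)\big)\\
&=cc_{s\lambda}\big(z,cc_\lambda(z,x)\big)=cc_{\lambda^*}(z,x),
\end{align*}
in which the second and fifth equalities are the hypothesis $cc_\lambda(y,x)=cc_\lambda(z,x)$. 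The only place the specific value $\lambda^*=\tfrac{2\lambda}{1+\lambda}$ enters is the third equality: the coefficient $\nu$ that~\ref{associativity} produces when rewriting $cc_s\big(cc_\lambda(z,x),y\big)$ as $cc_{s\lambda}\big(z,cc_\nu(x,y)\big)$ is $\nu=\tfrac{s(1-\lambda)}{1-s\lambda}$, and this equals $1-\lambda$ precisely when $s=\tfrac1{1+\lambda}$, so that~\ref{commutativity} turns $cc_{1-\lambda}(x,y)$ back into $cc_\lambda(y,x)$ and the hypothesis can be invoked a second time before flattening back down.

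I expect (b) to be the only real obstacle. The naive moves --- applying $cc_\mu(-,y)$ rather than $cc_\mu(-,x)$, or substituting the hypothesis just once into an iterated combination --- all circle back to a cancellation-type identity that is not yet available, so the apparent difficulty is a genuine circularity. It is resolved by choosing the intermediate weight so that the inner binary combination reappears, after re-orientation by~\ref{commutativity}, in exactly the form $cc_\lambda(y,x)$ needed to use the hypothesis twice within a single step; everything else is routine manipulation with axioms~\ref{idempotency},~\ref{commutativity} and~\ref{associativity}.
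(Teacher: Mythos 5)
Your proposal is correct and follows essentially the same route as the paper's proof: downward closure via the flattening identity (axioms~\ref{associativity} and~\ref{idempotency}), then iterating $\lambda\mapsto\frac{2\lambda}{1+\lambda}$ towards $1$, using the hypothesis twice in each step after an intermediate swap of $y$ and $z$ justified by~\ref{commutativity} and~\ref{associativity}. Your version merely makes explicit the re-orientation step that the paper leaves implicit in its third equality.
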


\begin{proof}
Let us write $\lambda_0$ for the original value for which the equation holds. Then for all $\lambda<\lambda_0$,
$$
cc_{\lambda}(y,x)=cc_{\lambda/\lambda_0}(cc_{\lambda_0}(y,x),x)=cc_{\lambda/\lambda_0}(cc_{\lambda_0}(z,x),x)=cc_{\lambda}(z,x)
$$
by~\ref{associativity} and~\ref{idempotency}, so that the equation is also true in that case. Hence it is enough to find a sequence $\left(\lambda_n\right)_{n\in\mathbb{N}}$ with $\lambda_n\stackrel{n\rightarrow\infty}{\longrightarrow}1$ for which the equation holds. We construct such a sequence by defining $\lambda_{n+1}=\frac{2\lambda_n}{1+\lambda_n}$, for which an inductive argument shows the validity of the equation:
\begin{align*}
cc_{\lambda_{n+1}}(y,x)&=cc_{\lambda_n/(1+\lambda_n)}(y,cc_{\lambda_n}(y,x))=cc_{\lambda_n/(1+\lambda_n)}(y,cc_{\lambda_n}(z,x))\\[.3cm]
&=cc_{\lambda_n/(1+\lambda_n)}(z,cc_{\lambda_n}(y,x))=cc_{\lambda_n/(1+\lambda_n)}(z,cc_{\lambda_n}(z,x))=cc_{\lambda_{n+1}}(z,x)\:.
\end{align*}
\end{proof}

\begin{cor}\label{embedding}
Let $X$ be a convex-like structure. Then there is a linear embedding of $X$
into some vector space.
\begin{proof}

By Theorem \ref{equivalence} and Theorem \ref{stone} it suffices to
prove the cancellation property: if
$\gamma_{\lambda,1-\lambda}(x,y)=\gamma_{\lambda,1-\lambda}(x,z)$ for some $\lambda\in (0,1)$, then $y=z$. By the previous lemma, we know that if $\gamma_{\lambda,1-\lambda}(x,y)=\gamma_{\lambda,1-\lambda}(x,z)$ holds for some $\lambda\in(0,1)$, then it holds for all $\lambda\in(0,1)$. But then, we get from~\ref{metric}, for any $\lambda>0$,
$$
d(y,z)\leq d(y,\gamma_{\lambda,1-\lambda}(x,y))+d(z,\gamma_{\lambda,1-\lambda}(x,z))\leq \lambda d(x,y)+\lambda d(x,z)=\lambda\left[d(x,y)+d(x,z)\right]
$$
Since $\lambda$ was arbitrary, we conclude $d(y,z)=0$, and hence $y=z$.
\end{proof}
\end{cor}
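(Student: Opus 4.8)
The plan is to assemble the results established so far. By Theorem~\ref{equivalence}, a convex-like structure on $X$ is in particular the structure of a convex space, with $cc_\lambda(x,y)=\gamma_{\lambda,1-\lambda}(x,y)$. By Theorem~\ref{stone}, such a convex space admits a linear embedding into a real vector space as soon as it satisfies the cancellation property. So the whole task reduces to verifying cancellation: that $\gamma_{\lambda,1-\lambda}(x,y)=\gamma_{\lambda,1-\lambda}(x,z)$ for some $\lambda\in(0,1)$ forces $y=z$. The only axiom of a convex-like structure not yet exploited is the metric compatibility~\ref{metric}, so that is where the actual work must go.

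First I would pass from a single value of $\lambda$ to all of them. Assuming $\gamma_{\lambda,1-\lambda}(x,y)=\gamma_{\lambda,1-\lambda}(x,z)$ for one $\lambda\in(0,1)$, a use of commutativity~\ref{commutativity} brings the fixed point $x$ into the slot matching the hypothesis of the preceding lemma, and that lemma then propagates the equality to \emph{every} $\lambda\in(0,1)$.

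Next I would extract a quantitative bound from~\ref{metric}. Since the two combinations $\gamma_{\lambda,1-\lambda}(x,y)$ and $\gamma_{\lambda,1-\lambda}(x,z)$ now coincide for all $\lambda$, the triangle inequality gives
\[
d(y,z)\leq d\bigl(y,\gamma_{\lambda,1-\lambda}(x,y)\bigr)+d\bigl(\gamma_{\lambda,1-\lambda}(x,z),z\bigr).
\]
To bound the first term I would write $y=\gamma_{\lambda,1-\lambda}(y,y)$, using idempotency~\ref{idempotency}, and apply~\ref{metric} to the argument tuples $(y,y)$ and $(x,y)$, obtaining $d\bigl(y,\gamma_{\lambda,1-\lambda}(x,y)\bigr)\leq\lambda\,d(x,y)$, and symmetrically $d\bigl(\gamma_{\lambda,1-\lambda}(x,z),z\bigr)\leq\lambda\,d(x,z)$. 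Hence $d(y,z)\leq\lambda\bigl(d(x,y)+d(x,z)\bigr)$ for every $\lambda\in(0,1)$, and letting $\lambda\to 0$ forces $d(y,z)=0$, i.e.\ $y=z$. This is the cancellation property, and the corollary follows.

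The only genuinely non-routine point is recognizing how~\ref{metric} should be used: compare the convex combination in question with a degenerate one in which both arguments are equal, and then send the weight to zero, so that the metric inequality collapses to the purely algebraic cancellation condition. The preceding lemma is precisely what legitimizes this maneuver, since a priori the equality might be known only for a single $\lambda$ bounded away from $0$, where the estimate $d(y,z)\leq\lambda\bigl(d(x,y)+d(x,z)\bigr)$ is useless; spreading the equality over all $\lambda\in(0,1)$ removes that obstruction. Everything else is routine bookkeeping with the axioms.
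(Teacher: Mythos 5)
Your proof is correct and follows essentially the same route as the paper: reduce to the cancellation property via Theorems~\ref{equivalence} and~\ref{stone}, propagate the equality to all $\lambda\in(0,1)$ using the preceding lemma, and then combine the triangle inequality with the metric axiom~\ref{metric} (comparing against the degenerate combination with equal arguments) to get $d(y,z)\leq\lambda\bigl(d(x,y)+d(x,z)\bigr)$ and let $\lambda\to 0$. The only difference is that you spell out the small commutativity step needed to match the lemma's hypothesis, which the paper leaves implicit.
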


\begin{rem}
The proof of Corollary \ref{embedding} indeed strongly depends on
Brown's axiom~\ref{metric}: in \cite{Fr} there are examples of
convex spaces which do not embed into a vector space.
\end{rem}

\section{Isometric embeddings into normed spaces}

\begin{lem}
\label{metricnorm}
Let $(X,d)$ be a metric space which is a convex subset $X\subseteq E$ of some vector space $E$ such that
\begin{equation}
\label{metricconv}
d(\lambda y+(1-\lambda)x,\lambda z+(1-\lambda)x)\leq \lambda d(y,z)\quad\forall x,y\in X,\:\lambda\in[0,1]
\end{equation}
holds. Then there is a norm $||\cdot||$ on $E$ such that for
all $x,y\in X$,
$$
d(x,y)=||x-y||\:.
$$
\end{lem}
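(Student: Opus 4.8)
The plan is to produce the norm as a Minkowski-type functional. After replacing $E$ by the subspace $E_0:=\mathrm{span}\{x-y:x,y\in X\}$ — any norm on $E_0$ extends to one on $E$ by picking a complementary subspace $G$, any norm on $G$, and taking the $\ell^1$-sum — I note that every $v\in E_0$ has a representation $v=\sum_i\lambda_i(a_i-b_i)$ with $\lambda_i\ge0$ and $a_i,b_i\in X$: write $v$ as a finite sum $\sum_z c_z\,z$ over $z\in X$ with $\sum_z c_z=0$, and split it into its positive and negative parts, each of which is a positive multiple of a convex combination of points of $X$, hence of a point of $X$. I then define
$$\|v\|\;:=\;\inf\Bigl\{\,\textstyle\sum_i\lambda_i\,d(a_i,b_i)\ :\ v=\sum_i\lambda_i(a_i-b_i),\ \lambda_i\ge0,\ a_i,b_i\in X\,\Bigr\}.$$
This is finite, nonnegative, absolutely homogeneous and subadditive straight from the definition, so $\|\cdot\|$ is a seminorm; what is left is to show $\|x-y\|=d(x,y)$ for $x,y\in X$ and that $\|\cdot\|$ is a norm.

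Since $\|x-y\|\le d(x,y)$ is trivial (use the one-term representation), the real content is the reverse inequality. First I would record two consequences of~(\ref{metricconv}): the \emph{radial identity} $d(\lambda p+(1-\lambda)q,\,q)=\lambda\,d(p,q)$ — ``$\le$'' is~(\ref{metricconv}), and ``$\ge$'' follows by running the triangle inequality through $\lambda p+(1-\lambda)q$ and using~(\ref{metricconv}) once more — and the \emph{two-variable inequality} $d(\lambda p+(1-\lambda)q,\,\lambda r+(1-\lambda)s)\le\lambda\,d(p,r)+(1-\lambda)\,d(q,s)$, obtained by the triangle inequality through $\lambda p+(1-\lambda)s$ followed by two applications of~(\ref{metricconv}). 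Iterating the latter gives the $n$-variable bound $d(\sum_i\mu_ix_i,\,\sum_i\mu_iy_i)\le\sum_i\mu_i\,d(x_i,y_i)$; in particular any representation compresses to a one-term one, $v=\Lambda(\bar a-\bar b)$ with $\Lambda=\sum_i\lambda_i$, $\bar a=\Lambda^{-1}\sum_i\lambda_ia_i$, $\bar b=\Lambda^{-1}\sum_i\lambda_ib_i$, satisfying $\Lambda\,d(\bar a,\bar b)\le\sum_i\lambda_i\,d(a_i,b_i)$.

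The hard part is the following estimate, which I would prove by a convexity argument along an affine path inside $X$: if $x,y,a,b\in X$ and $x-y=\mu(a-b)$ with $\mu\in(0,1)$, then $d(x,y)\ge\mu\,d(a,b)$. Set $u=\mu(b-y)$ and $\phi(t)=d(x+tu,\,y+tu)$. One checks that for all $t\in[0,1/\mu]$ the points $y+tu=(1-t\mu)y+t\mu b$ and $x+tu=(1-t\mu)x+t\mu\bigl(\mu a+(1-\mu)b\bigr)$ are convex combinations of points of $X$, hence lie in $X$; since they depend affinely on $t$, the two-variable inequality shows $\phi$ is convex on $[0,1/\mu]$. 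The radial identity gives $\phi(1)=d\bigl(\mu a+(1-\mu)y,\,\mu b+(1-\mu)y\bigr)=\mu\,d(a,b)$ and $\phi(1/\mu)=d\bigl(\mu a+(1-\mu)b,\,b\bigr)=\mu\,d(a,b)$, so writing $1=(1-\mu)\cdot0+\mu\cdot(1/\mu)$ and using convexity, $\mu\,d(a,b)=\phi(1)\le(1-\mu)\phi(0)+\mu\,\phi(1/\mu)=(1-\mu)\phi(0)+\mu^2d(a,b)$, whence $d(x,y)=\phi(0)\ge\mu\,d(a,b)$. This is the step where all the difficulty lies — choosing the right path and exploiting that $d$ is convex along it.

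Finally I would combine this estimate with a splitting trick. Given any representation $x-y=\sum_i\lambda_i(a_i-b_i)$, replace each term by $\lambda_i(a_i-c_i)+\lambda_i(c_i-b_i)$ with $c_i=\tfrac12(a_i+b_i)\in X$; by the radial identity $d(a_i,c_i)+d(c_i,b_i)=d(a_i,b_i)$, so the value $\sum_i\lambda_i\,d(a_i,b_i)$ is unchanged while the total weight doubles. Iterating, we may assume $M:=\sum_i\lambda_i>1$; compressing to $x-y=M(\bar a-\bar b)$ we have $M\,d(\bar a,\bar b)\le\sum_i\lambda_i\,d(a_i,b_i)$, and since $\bar a-\bar b=\tfrac1M(x-y)$ with $\tfrac1M\in(0,1)$, the estimate above (applied to the pair $\bar a,\bar b$) gives $d(\bar a,\bar b)\ge\tfrac1M\,d(x,y)$, hence $d(x,y)\le M\,d(\bar a,\bar b)\le\sum_i\lambda_i\,d(a_i,b_i)$. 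This yields $\|x-y\|\ge d(x,y)$, so $\|x-y\|=d(x,y)$. Positive-definiteness is then immediate: a nonzero $v\in E_0$ compresses to $\Lambda(\bar a-\bar b)$ with $\Lambda>0$ and $\bar a\ne\bar b$, so $\|v\|=\Lambda\,d(\bar a,\bar b)>0$. Thus $\|\cdot\|$ is a norm on $E_0$ with $d(x,y)=\|x-y\|$, and extending it to $E$ as above finishes the proof.
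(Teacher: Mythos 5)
Your overall strategy---define the candidate norm on $\mathrm{span}\{x-y : x,y\in X\}$ as an infimum over representations $v=\sum_i\lambda_i(a_i-b_i)$, and reduce $\|x-y\|\ge d(x,y)$ via compression and the splitting trick to the single estimate ``$x-y=\mu(a-b)$ with $\mu\in(0,1)$ implies $d(x,y)\ge\mu\,d(a,b)$''---is a genuinely different route from the paper's (which proves translation invariance of $d$ directly and then extends $d$ to a translation-invariant metric on the affine hull). But exactly at the step you yourself flag as containing all the difficulty there is a genuine gap: the evaluation $\phi(1)=d\bigl(\mu a+(1-\mu)y,\,\mu b+(1-\mu)y\bigr)=\mu\,d(a,b)$ is not an instance of the radial identity. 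The radial identity only computes the distance from a point $q$ to a point on a segment having $q$ as an endpoint; in $\phi(1)$ neither argument is the anchor $y$. What you are asserting is the equality case of~(\ref{metricconv}) for a general anchor, equivalently translation invariance of $d$ between the parallel pairs $\bigl((1-\mu)y+\mu a,(1-\mu)y+\mu b\bigr)$ and $\bigl(\mu a+(1-\mu)b,\,b\bigr)$, both of which have difference $\mu(a-b)$. That statement is precisely the hard content of the lemma---it is what the paper establishes with the $x_\varepsilon,y_\varepsilon,z_\varepsilon$ parallelogram construction and the limit $\varepsilon\to 0$---so at this point you are assuming what has to be proved. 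With only the justified bound $\phi(1)\le\mu\,d(a,b)$ (which is just~(\ref{metricconv})), the convexity inequality $\phi(1)\le(1-\mu)\phi(0)+\mu\,\phi(1/\mu)$ yields no lower bound on $\phi(0)$ whatsoever, so the key estimate, and with it $\|x-y\|=d(x,y)$, is not established. (Your evaluation $\phi(1/\mu)=\mu\,d(a,b)$, by contrast, is a correct use of the radial identity, since there one argument is the endpoint $b$.)

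A secondary, fixable point: the positive-definiteness one-liner (``a nonzero $v$ compresses to $\Lambda(\bar a-\bar b)$, so $\|v\|=\Lambda\,d(\bar a,\bar b)>0$'') overlooks that $\Lambda,\bar a,\bar b$ vary with the representation; one should fix a reference one-term representation $v=\Lambda_0(a_0-b_0)$, use your splitting trick to push the total weight of an arbitrary representation above $\Lambda_0$, and then apply the key estimate---so this too ultimately rests on the missing step. To close the gap you essentially need the paper's argument: after the radial identity, prove translation invariance (if $y_1-x_1=y_0-x_0$ then $d(x_1,y_1)=d(x_0,y_0)$) by comparing $d(x_\varepsilon,y_\varepsilon)$ with $d(x_1,y_1)$ through $z_\varepsilon=(1-\varepsilon)x_\varepsilon+\varepsilon y_\varepsilon$ and letting $\varepsilon\to 0$; once that is available, your key estimate and hence your normed-space construction go through.
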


\begin{proof}
As a special case,~(\ref{metricconv}) gives for $z=x$,
$$
d(\lambda y+(1-\lambda)x,x)\leq \lambda d(y,x)
$$
which yields, in combination with the triangle inequality,
$$
d(y,x)\leq d(y,\lambda y+(1-\lambda)x)+d(\lambda y+(1-\lambda)x,x)\leq (1-\lambda) d(y,x)+\lambda d(y,x)\:.
$$
Since the term on the left-hand side equals the term on the right-hand side, we deduce that both inequalities are actually equalities. In particular, the metric is ``uniform on lines'' in the sense that
$$
d(x,(1-\lambda)x+\lambda y)=\lambda d(x,y)\quad\forall x,y\in X,\:\lambda\in [0,1]\:.
$$

Now in order to prove the assertion, it needs to be shown that $d$
is translation-invariant in the following sense: suppose that
$x_0,x_1,y_0,y_1\in X$
are such that
$$
y_1-x_1=y_0-x_0\:,
$$
then $d(x_1,y_1)=d(x_0,y_0)$. See figure~\ref{parallelogram} for an illustration. For $\varepsilon\in(0,1)$, we will
also consider the points
$$
x_{\varepsilon}=\varepsilon x_1+(1-\varepsilon)x_0\:,\qquad y_{\varepsilon}=\varepsilon y_1+(1-\varepsilon)y_0\:,\qquad z_\varepsilon=(1-\varepsilon)x_\varepsilon + \varepsilon y_\varepsilon=\varepsilon y_1+(1-\varepsilon)x_0\:.
$$
Then by the assumption~(\ref{metricconv}),
$$
d(x_{\varepsilon},z_\varepsilon)=d\left(\varepsilon x_1+(1-\varepsilon)x_0,\varepsilon y_1+(1-\varepsilon)x_0\right)\leq \varepsilon d(x_1,y_1)\:.
$$
By the definition of $z_\varepsilon$ and the uniformity of $d$ on the line connecting $z_\varepsilon$ with $x_\varepsilon$ and $y_\varepsilon$, we have
$$
d(x_\varepsilon,y_\varepsilon)=\varepsilon^{-1}d(x_\varepsilon,z_\varepsilon)\leq d(x_1,y_1)\:.
$$
Upon taking the limit $\varepsilon\rightarrow 0$ we therefore arrive at
$$
d(x_0,y_0)\leq d(x_1,y_1)\:,
$$
and the other inequality direction is then clear by symmetry, so that $d$ is indeed translation invariant.

Now $d$ can be uniquely extended to a translation-invariant metric on the affine hull of $X$. Assuming $0\in X$ without loss of generality, this affine hull equals the linear hull, and then the translation-invariant metric on $\mathrm{lin}(X)$ comes from a norm. If necessary, this norm can be extended from the subspace $\mathrm{lin}(X)$ to all of $E$.
\end{proof}

\begin{figure}
\begin{centering}
\begin{tikzpicture}
\draw (0,0)node[anchor=north east]{$x_0$}--(5,1)node[anchor=north west]{$x_1$}--(5,4)node[anchor=south west]{$y_1$}--(0,3)node[anchor=south east]{$y_0$}--(0,0)--(1,.2)node[anchor=north]{$x_\varepsilon$}--(1,3.2)node[anchor=south]{$y_\varepsilon$};
\draw (0,0)--(5,4);
\draw (1,.8)node[anchor=south east]{$z_\varepsilon$};
\fill (0,0) circle (.05);
\fill (5,1) circle (.05);
\fill (5,4) circle (.05);
\fill (0,3) circle (.05);
\fill (1,.2) circle (.05);
\fill (1,3.2) circle (.05);
\fill (1,.8) circle (.05);
\end{tikzpicture}
\end{centering}
\label{parallelogram}
\caption{Illustration of the proof of lemma~\ref{metricnorm}.}
\end{figure}
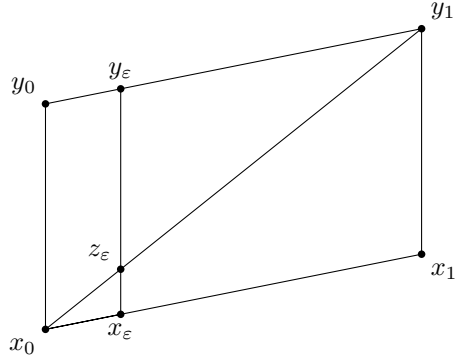

Now we have assembled all the ingredients for our main theorem:

\begin{teo}\label{banach}
Every convex-like structure is affinely and isometrically isomorphic
to a closed convex subset of a Banach space.
\end{teo}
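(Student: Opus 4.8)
The plan is to glue together the results established so far. Let $(X,d)$ be a convex-like structure. By Theorem~\ref{equivalence} it carries a convex space structure with $cc_\lambda=\gamma_{\lambda,1-\lambda}$, and by Corollary~\ref{embedding} together with Theorem~\ref{stone} there is an injective map $j\colon X\to E$ into a real vector space $E$ with $j(cc_\lambda(x,y))=\lambda j(x)+(1-\lambda)j(y)$. Since the operations $\gamma_\mu$ of arbitrary arity are reconstructed from the binary ones by the recursion used in the proof of Theorem~\ref{equivalence}, and ordinary convex combinations in $E$ obey that very recursion, the map $j$ automatically intertwines every $\gamma_\mu$ with the corresponding convex combination in $E$. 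In particular $j(X)$ is a convex subset of $E$, and we may identify $X$ with $j(X)$, equipped with the metric $d$.

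The next step is to recognize the hypothesis of Lemma~\ref{metricnorm}. Reading the metric compatibility axiom~\ref{metric} for $n=2$, $\mu=(\lambda,1-\lambda)$ and $x_2=y_2=:x$ gives exactly
\[
d\bigl(\lambda y+(1-\lambda)x,\;\lambda z+(1-\lambda)x\bigr)\leq\lambda\,d(y,z),\qquad x,y,z\in X,\;\lambda\in[0,1],
\]
which is condition~(\ref{metricconv}). Hence Lemma~\ref{metricnorm} applies and yields a norm $\|\cdot\|$ on $E$ with $d(x,y)=\|x-y\|$ for all $x,y\in X$. At this point $X$ is an affinely and isometrically embedded convex subset of the normed space $(E,\|\cdot\|)$.

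It remains to upgrade the target to a Banach space and to check that the image is closed. For this I would pass to the completion $\widehat{E}$ of $(E,\|\cdot\|)$, a Banach space into which $E$, and hence $X$, embeds isometrically; convexity of $X$ inside $\widehat{E}$ is inherited from $E$. This is the only place where completeness of $(X,d)$ — part of the definition of a convex-like structure — enters: an isometric copy of a complete metric space is complete, hence closed in $\widehat{E}$. Thus $X$ is affinely and isometrically isomorphic to the closed convex subset $j(X)$ of the Banach space $\widehat{E}$. I do not foresee a real obstacle here: all the substance already lives in Theorem~\ref{equivalence}, Corollary~\ref{embedding} and Lemma~\ref{metricnorm}, and the remaining task is just to spot the right specialization of axiom~\ref{metric} and to run the standard completion argument; the one point worth a moment's care is that $j$, together with its inverse on $j(X)$, preserves all convex combinations, so it is an affine isomorphism onto its image and not merely a convex-combination-preserving injection.
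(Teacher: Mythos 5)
Your proof is correct and follows essentially the same route as the paper: recognize the hypothesis~(\ref{metricconv}) of Lemma~\ref{metricnorm} as the $n=2$, $x_2=y_2$ case of axiom~\ref{metric}, combine Corollary~\ref{embedding} with that lemma to get an affine isometric embedding into a normed space, pass to the completion, and use completeness of $X$ for closedness. The extra care you take about $j$ intertwining all the $\gamma_\mu$ is a fine elaboration but not a different argument.
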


\begin{proof}
Since the inequality~(\ref{metricconv}) is an instance of the metric compatibility axiom~\ref{metric}, this is a direct consequence of corollary~\ref{embedding} and lemma~\ref{metricnorm} and the fact that every norm space embeds into its completion, which is a Banach space. Closedness then follows from the requirement that a convex-like structure is assumed to be complete.
\end{proof}

\begin{rem}
We have not used the completeness of $X$ in the derivation of
corollary~\ref{embedding} or lemma~(\ref{metricnorm}). So if we would remove this
hypothesis from the axioms, then we would get that (not necessarily complete) convex-like structures are
precisely the convex subsets of normed spaces.
\end{rem}

\begin{rem}
\label{finalrem}
Given the axioms in Definition \ref{convex-like}, Brown's original first metric compatibility condition
\begin{quote}
``\: There is a constant $C$ such that for all $x_1,\ldots, x_n \in X$,
$$d (\gamma_{\mu} (x_1,\ldots,x_n), \gamma_{\tilde{\mu}}
(x_1,\ldots,x_n)) \leq C \sum_{i=1}^n| \mu(i) - \tilde{\mu}(i)|\:,\textrm{''}
$$
\end{quote}
holds if and only if $X$ is bounded (as a metric space).
\end{rem}

\begin{proof}
By Theorem \ref{banach}, we can take $X$ to be a closed convex subset of a Banach space, with the metric $d$ induced by the norm. Brown's condition then just states that
\begin{equation}
\label{firstmetricaxiom}
\left\|\sum_{i=1}^n(\mu(i)-\tilde{\mu}(i))\, x_i\right\|\leq C\sum_{i=1}^n| \mu(i) - \tilde{\mu}(i)|\:.
\end{equation}
If $X$ is bounded, then we can set $C=\sup_{x\in X}||x||$, and the inequality holds. Conversely, we can use~(\ref{firstmetricaxiom}) to deduce the boundedness of $X$: taking $n=2$ and $\mu(1)=\tilde{\mu}(2)=1$ gives
$$
d(x_1,x_2)=||x_1-x_2||\leq 2C\:.
$$
\end{proof}

The following corollary is a reformulation of our previous results.
It provides a simple way to axiomatize (closed) convex subsets of Banach
spaces.

\begin{cor}
Let $(X,\{cc_\lambda\},d)$ be a convex space in the sense of Definition
\ref{convex} together with a (complete) metric $d$. It is a (closed)
convex subset of a Banach space if and only if it satisfies the inequality
$$
d(cc_\lambda(y,x),cc_\lambda(z,x)) \leq \lambda d(y,z) \quad\forall x,y,z\in X\:,\lambda\in[0,1]\:.
$$
\end{cor}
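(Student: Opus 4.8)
The plan is to deduce this corollary from Theorem~\ref{equivalence} and Theorem~\ref{banach}; the one new point is that the single displayed inequality already forces the full metric compatibility axiom~\ref{metric}. Note first that the inequality is exactly the case $n=2$, $\mu=(\lambda,1-\lambda)$ of~\ref{metric} applied to the tuples $(y,x)$ and $(z,x)$ (the $d(x,x)$ term drops out), so it is certainly necessary; the content is that it is sufficient. In the ``only if'' direction this is immediate anyway: if $X$ is a (closed) convex subset of a Banach space then $cc_\lambda$ is the inherited convex combination, $cc_\lambda(y,x)$ and $cc_\lambda(z,x)$ differ by $\lambda(y-z)$, and hence $d(cc_\lambda(y,x),cc_\lambda(z,x))=\lambda\|y-z\|=\lambda d(y,z)$.

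For the ``if'' direction, I would first promote the hypothesis to the symmetric two-variable estimate
\[
d\bigl(cc_\lambda(x_1,x_2),cc_\lambda(y_1,y_2)\bigr)\le\lambda\,d(x_1,y_1)+(1-\lambda)\,d(x_2,y_2)
\]
by inserting the intermediate point $cc_\lambda(y_1,x_2)$ and using the triangle inequality: the first term $d(cc_\lambda(x_1,x_2),cc_\lambda(y_1,x_2))$ is bounded by $\lambda d(x_1,y_1)$ directly from the hypothesis, and the second term $d(cc_\lambda(y_1,x_2),cc_\lambda(y_1,y_2))$, after rewriting $cc_\lambda(y_1,x_2)=cc_{1-\lambda}(x_2,y_1)$ and $cc_\lambda(y_1,y_2)=cc_{1-\lambda}(y_2,y_1)$ via the commutativity axiom~\ref{commutativity}, is bounded by $(1-\lambda)d(x_2,y_2)$ again from the hypothesis. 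Next, by Theorem~\ref{equivalence} the convex space $(X,\{cc_\lambda\})$ carries its unique compatible family of $n$-ary operations $\gamma_\mu$ obeying the algebraic axioms~\ref{abelian},~\ref{linearity},~\ref{dirac},~\ref{algebraic}, built by the recursion $\gamma_\mu(x_1,\dots,x_n)=cc_{1-\mu(n)}\bigl(\gamma_\nu(x_1,\dots,x_{n-1}),x_n\bigr)$ from the proof of that theorem. A straightforward induction on $n$, feeding this recursion into the two-variable estimate above, then yields~\ref{metric} in full; and since~\ref{metric} makes each $\gamma_\mu$ Lipschitz, hence continuous, on $X^{(n)}$, the triple $(X,d,\{\gamma_\mu\})$ satisfies every clause of Definition~\ref{convex-like}.

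Finally I would invoke the earlier results: when $d$ is complete, $X$ is a convex-like structure and Theorem~\ref{banach} gives that it is affinely and isometrically isomorphic to a closed convex subset of a Banach space; when $d$ is not assumed complete, the remark after Theorem~\ref{banach} applies (completeness enters neither Corollary~\ref{embedding} nor Lemma~\ref{metricnorm}), so $X$ still embeds affinely and isometrically into a normed space, and thence into its Banach completion as a (not necessarily closed) convex subset. Thus the parenthetical ``(closed)'' holds precisely when ``(complete)'' does, which is the claim. The only step with any real content is the bootstrap from the one-variable hypothesis to the $n$-variable axiom~\ref{metric}; everything else is a repackaging of Theorems~\ref{equivalence} and~\ref{banach}.
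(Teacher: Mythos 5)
Your proof is correct and follows essentially the route the paper intends: the corollary is a repackaging of Theorems~\ref{equivalence} and~\ref{banach}, with the displayed binary inequality supplying the metric ingredient, the ``only if'' direction being trivial, and the (complete)/(closed) correspondence handled via the remark after Theorem~\ref{banach}. Your induction promoting the binary estimate to the full axiom~\ref{metric} (and hence continuity of the $\gamma_\mu$) is a valid way to make Theorem~\ref{banach} literally applicable, though it could even be bypassed, since the proofs of Corollary~\ref{embedding} and Lemma~\ref{metricnorm} only ever use the binary instance you are given.
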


\end{document}